\newcommand{\horedge}{\psline{*-*}(0,0)(1,0)}
\newcommand{\vertedge}{\psline{*-*}(0,0)(0,1)}
\newcommand{\east}{\psline(0,0)(1,0)}
\newcommand{\west}{\psline{o-*}(0,0)(-1,0)}
\newcommand{\neast}{\psline{o-*}(0,0)(1;60)}
\newcommand{\nwest}{\psline{*-o}(0,0)(1;120)}
\newtheorem{example}{Example}[section]
\newtheorem{remark}[example]{Remark}
\newtheorem{theorem}[example]{Theorem}
\newtheorem{conjecture}[example]{Conjecture}
\newtheorem{proposition}[example]{Proposition}
\def\Proof{\noindent \it Proof -- \rm}
\def\qed{\hspace{3.5mm} \hfill \vbox{\hrule height 3pt depth 2 pt width 2mm}
\bigskip}
\newcommand{\pref}[1]{(\ref{#1})}
\newcommand{\Z}{\mathbb Z}
\newcommand{\R}{{\bf\mathcal R}}
\author{Jean-Christophe Aval, Philippe Duchon}
\title[HTFPL$s$: rare couplings and tilings of hexagons]{Half-turn symmetric FPL$s$ with rare couplings and tilings of hexagons}
\address{LaBRI, Universit\'e de Bordeaux,
351 cours de la Lib\'eration, 33405 Talence cedex, France}
\keywords{Fully packed loop configurations, tilings of the hexagon}
\date{\today}
\begin{document}
\maketitle
\SpecialCoor

\begin{abstract}
In this work, we put to light a formula that relies the number of fully packed loop configurations (FPLs) 
associated to a given coupling $\pi$ to the number of half-turn symmetric FPLs (HTFPLs) of even size whose coupling is a  punctured version
of the coupling $\pi$. When the coupling $\pi$ is the coupling with all arches parallel $\pi_0$ (the ``rarest'' one), this formula states the equality of 
the number of  corresponding HTFPLs to the number of cyclically-symmetric plane partition of the same size. 
We provide a bijective proof of this fact. 
In the case of HTFPLs odd size, and although there is no similar expression, we study the number of HTFPLs
whose coupling is a slit version of $\pi_0$, and put to light
new  puzzling enumerative coincidence involving countings of tilings of hexagons and various symmetry classes of FPLs.
\end{abstract} 

\section*{Introduction}

Fully packed loop configurations (FPLs) are ubiquitous objects which are fascinating both in the world of theoretical physics
(they appear in the so-called six-vertex ice model) and in the world of combinatorics (they are in bijection with alternating sign matrices,
which are the center of an intense research for years).
In 2004 Razumov and Stroganov \cite{RS} stated a remarkable conjecture that relies the stationary distribution of the 
$O(1)$-dense loop model to the enumeration of FPLs according to their coupling. After several years of efforts, their formula was
only recently proved by Cantini and Sportiello \cite{CS} by means of a purely combinatorial method using the operation of gyration
discovered by Wieland \cite{wieland}.
Following Razumov and Stroganov's investigations, de Gier \cite{dG} gave in 2005 an analogous conjectural formula for the same 
model with half-turn symmetry constraints.

When we compare Razuomv-Stroganov's and de Gier's formula (for the even size), we are led to the following interesting expression:
the number of  FPLs of size $n$ and  coupling $\pi$ is equal to the sum of the numbers of 
half-turn symmetric FPLs (HTFPLs) of size $2n$ and coupling a  punctured version of $\pi$. 
A special case is when the coupling is the rarest one $\pi_0$ (with the arches all 
parallel), where this expression reduces to an equality between the number of half-turn symmetric FPLs of size $2n$
with their coupling being a punctured version of $\pi_0$ and the number of cyclically symmetric plane partition of size $2n$.
We are able to prove this assertion {\em bijectively}.

In the case of the odd size, there is no natural expression between couplings of HTFPLs and asymmetric FPLs.
Nevertheless, we may study the number of HTFPLs of size $2n+1$ whose couplings are slit versions of $\pi_0$.
Using a factorization principle due to Ciucu \cite{ciucu1}, we are lead to evaluate the number of tilings with losenges 
of portions of some hexagonal regions. These numbers of tilings may be expressed through determinants \cite{kratt}. 
Surprisingly, we put to light that several determinant expressions are proved or conjectured to be equal 
to the number  of symmtry classes of FPLs!

This paper is organized as follows: Section \ref{sec:def} presents all definitions relative to FPLs and their couplings,
Section \ref{sec:even} deals with the case of even-sized HTFPLs, Section \ref{sec:odd} presents the problem studied
in the case of the odd size, together with its reduction to the evaluation of determinants and presents new intriguing results and conjectures
of equinumeration between certain tilings and symmetry classes of FPLs.

\section{Definitions}\label{sec:def}

\subsection{FPLs and their couplings}

A {\em fully-packed loop configuration} (FPL for short) of size $N$ is a subgraph of the $N \times N$ square lattice,
where each internal vertex has degree exactly $2$. The set of edges forms a set of closed loops and paths ending at the
boundary vertices. The boundary conditions are the alternating conditions: boundary vertices also have
degree $2$ when boundary edges (edges that connect the finite square lattice to the rest of the $\Z^2$ lattice)
are taken into account, and these boundary edges, when going around the grid, are alternatingly “in” and
“out” of the FPL. For definiteness, we use the convention that the top edge along the left border is always
“in”. Thus, exactly $2N$ boundary edges act as endpoints for paths, and the FPL consists of $N$ noncrossing
paths and an indeterminate number of closed loops.

Any FPL $f$ of size $N$ has a {\em coupling} $\pi(f)$, which is a partition of the set of integers $\{1\dots 2N\}$ into pairs,
defined as follows: first label the endpoints of the open loops $1$ to $2N$ in clockwise or counterclockwise
order (for definiteness, we use counterclockwise order, starting with the top left endpoint); then the link
pattern $\pi(f)$ will include pair $(i, j)$ if and only if $f$ contains a loop whose two endpoints are labeled $i$
and $j$. Because the loops are noncrossing, the coupling satisfies the noncrossing condition: if a link
pattern $\pi$ contains two pairs $(i, j)$ and $k,l$, then one cannot have $i < k < j < l$. The possible link
patterns for FPLs of size $N$ are thus counted by the Catalan numbers $C_N = \frac1{N+1}{2N\choose N}$.
Figure~\ref{fig:FPL} gives an example of an FPL together with its coupling.
We shall denote by $A(N;\pi)$ the number of FPLs of size$N$ which afford coupling $\pi$, and by $A(N)$
the total number of FPLs, which is equal, because of the bijection between FPL and alternating-sign matrices 
\cite{zeilberger} to:
\begin{equation}
A(N)=\prod_{i=0}^{n−1} \frac{(3i + 1)!}{(n + i)!}\,.
\end{equation}

\begin{figure}[ht]

\begin{tabular}{cc}

      \psset{unit=4mm}
      \begin{pspicture}(-1,0)(9,9)
      \psline(1,0)(1,1)(2,1)(2,2)(3,2)(3,4)(2,4)(2,3)(1,3)(1,2)(0,2)
      \psline(0,4)(1,4)(1,6)(0,6)
      \psline(0,8)(1,8)(1,7)(3,7)(3,8)(2,8)(2,9)
      \psline(3,0)(3,1)(5,1)(5,0)
      \psline(7,0)(7,1)(6,1)(6,3)(7,3)(7,2)(8,2)(8,1)(9,1)
      \psline(9,3)(8,3)(8,4)(4,4)(4,5)(2,5)(2,6)(4,6)(4,9)
      \psline(9,5)(8,5)(8,6)(7,6)(7,5)(5,5)(5,6)(6,6)(6,7)(5,7)(5,8)(6,8)(6,9)
      \psline(9,7)(7,7)(7,8)(8,8)(8,9)
      \psline(4,2)(4,3)(5,3)(5,2)(4,2)
      \rput[r](0,8){$1$}\rput[r](0,6){$2$}
    \end{pspicture}

&

 \begin{pspicture}(0,0)
 \rput(.5,3){$1$}\rput(.1,2.4){$2$}
 \end{pspicture}
 \epsfig{file=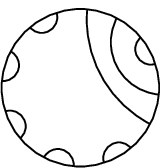,scale=2}

\end{tabular}

\caption{An FPL of size 8 and its coupling}
\label{fig:FPL}
\end{figure}

Let us introduce a particular coupling, denoted $\pi_{0,n}$ (or $\pi_{0}$ if there is no ambiguity),
defined as:
$$\pi_{0,n} = \{ \{i,2n+1-i\}_{1\leq i\leq n}\}.$$
The coupling $\pi_0$ is, up to rotation the rarest one: $A(n,\pi_0)=1$.

The $2N$ generators $e_1, \dots, e_{2N}$ of the cyclic Temperley-Lieb algebra act on couplings of size $N$
in the following way: if the coupling $\pi$ contains pairs $(i, j$) and $(i + 1, k)$, then $e_i\pi = \pi'$, where $\pi'$
is obtained from $\pi$ by replacing the pairs $(i, j)$ and $(i + 1, k)$ by $(i, i + 1)$ and $(j, k)$; if $(i, i + 1) \in \pi$,
then $\pi' = \pi$. An illustration of this action is given by Figure~\ref{fig:TLaction}.

\begin{figure}[ht]
\begin{center}
  \begin{pspicture}(9,4)
    \SpecialCoor
    \rput(2,2){%
      \pscircle[linewidth=.2pt](0,0){1}
      \multido{\n=00.0+22.5}{16}{\rput(1;\n){\psdot[dotsize=.1]}}
      \psarc(1.0196;11.25){.1989}{112.5}{270}
      \psarc(1.0196;56.25){.1989}{157.5}{315}
      \psarc(1.0196;123.75){.1989}{225}{22.5}
      \psarc(1.0196;191.25){.1989}{292.5}{90}
      \psarc(1.0196;281.25){.1989}{22.5}{180}
      \psarc(1.2027;191.25){.6682}{305}{67.5}
      \psarc(1.2027;281.25){.6682}{45}{157.5}
      \psarc(1.8;33.75){1.4966}{180}{247.5}
      \rput(0,0){$\pi$}
      \psset{linecolor=blue}
	\pscircle(0,0){2}
	\multido{\n=180.0+022.5}{14}{\psline(1;\n)(2;\n)}
	\psarc(1.0196;146.25){.1989}{45}{247.5}
	\psarc(2.0392;146.25){.3978}{247.5}{45}
	\rput(2.2;135){$i$}
	\rput[r](2.2;157.5){$i+1$}
	\rput(2.2;112.5){$j$}
	\rput(2.2;225){$k$}
	\multido{\nn=00.0+22.5}{16}{\rput(2;\nn){\psdot[dotsize=.1,linecolor=blue]}}
      }
      \rput(7,2){
	\pscircle(0,0){2}
	\multido{\n=00.0+22.5}{16}{\rput(2;\n){\psdot[dotsize=.1]}}
	\psarc(2.0392;11.25){.3978}{112.5}{270}
	\psarc(2.0392;56.25){.3978}{157.5}{315}
	\psarc(2.0392;146.25){.3978}{247.5}{45}
	\psarc(2.0392;191.25){.3978}{292.5}{90}
	\psarc(2.0392;281.25){.3978}{22.5}{180}
	\psarc(2.4054;281.25){1.3364}{45}{157.5}
	\psarc(3.6;33.75){2.9932}{180}{247.5}
	\psarc(3.6;168.75){2.9932}{315}{22.5}
	\rput(0,0){$e_i(\pi)$}
      }
   \end{pspicture}
\end{center}
     
\caption{Temperley-Lieb action}     
\label{fig:TLaction}
\end{figure}
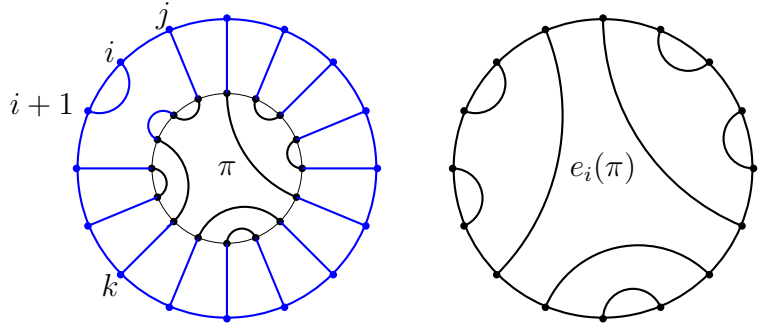

One can define a Markov chain on couplings
where we choose at each time step one of the appropriate generators (uniformly at random) and apply it
to the current state. The Markov chain defined in this way is easily checked to be irreducible and aperiodic, hence
it has a unique stationary distribution. The celebrated Razumov-Stroganov conjecture \cite{RS}, proven by Cantini
and Sportiello \cite{CS}, may be stated as follows.

\begin{theorem}\label{theo:RSCS}{\em [Cantini, Sportiello]} 
The stationary distribution for couplings of size $N$ is
\begin{equation}\label{eq:RS}
\mu(\pi) = \frac{A(N; \pi)} {A(N)}.
\end{equation}
\end{theorem}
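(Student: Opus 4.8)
The plan is to verify directly that the explicit candidate $\mu(\pi)=A(N;\pi)/A(N)$ is the stationary distribution. Since the chain is irreducible and aperiodic, its stationary distribution is unique and is characterized as the unique probability vector fixed by the transition operator $P=\frac1{2N}\sum_{i=1}^{2N}e_i$, where each $e_i$ is regarded as the $0/1$ stochastic matrix on couplings induced by the Temperley-Lieb rule of Figure~\ref{fig:TLaction}. Writing $\Psi$ for the vector with entries $\Psi_\pi=A(N;\pi)$ and unwinding the transition probabilities, the stationarity equation $\mu=\mu P$ is equivalent to the single eigenvector identity
\begin{equation}
\sum_{i=1}^{2N} e_i\,\Psi = 2N\,\Psi .
\end{equation}
Thus it suffices to establish this one linear relation for the FPL-counting vector; the normalization by $A(N)$ then takes care of itself.

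Componentwise, the left-hand side reads, for a fixed target coupling $\sigma$,
\begin{equation}
\Big(\sum_i e_i\Psi\Big)_\sigma=\sum_{i=1}^{2N}\ \sum_{\pi:\,e_i\pi=\sigma} A(N;\pi),
\end{equation}
so the whole problem is to match this weighted count of $e_i$-preimages against $2N\,A(N;\sigma)$. The tool I would use is Wieland's gyration \cite{wieland}, an explicit invertible operation on the set of FPLs of size $N$ whose only effect on the associated coupling is to rotate it by one notch. Iterating gyration already yields the rotational invariance $A(N;\pi)=A(N;\rho\pi)$ of $\Psi$, one of the symmetries the ground state must possess; more importantly, gyration is the right vehicle for turning the abstract Temperley-Lieb moves into honest operations on configurations.

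The heart of the argument, following Cantini-Sportiello \cite{CS}, is a localized version of gyration. One decomposes a full gyration into elementary plaquette flips running along a diagonal and tracks how the flips performed in the neighbourhood of the two boundary edges labelled $i$ and $i+1$ transform the coupling. The outcome one aims to prove is that this local operation realizes exactly the action of $e_i$: it either creates the small arch $(i,i+1)$, reconnecting the two strands ending at $i$ and $i+1$, or acts as the identity precisely when $(i,i+1)$ is already present. This would furnish, for each position $i$, a configuration-level correspondence implementing $e_i$ on FPLs.

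It then remains to assemble these local correspondences into the global identity by a double-counting argument: summing over all $2N$ positions and over all FPLs, and using that a complete sweep of plaquette flips is precisely Wieland's rotation, one shows that each FPL of coupling $\sigma$ is accounted for exactly $2N$ times, giving the desired equality. I expect the main obstacle to be this local step: proving that the plaquette-flip operation faithfully encodes $e_i$ on couplings --- in particular handling the fixed-point case $(i,i+1)\in\pi$ and verifying that the correspondence is a genuine bijection rather than merely an injection --- is the delicate combinatorial core, and it is exactly there that the structural properties of gyration must do the real work.
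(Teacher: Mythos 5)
The paper does not prove this statement at all: it is quoted as an external result, with the proof attributed to Cantini and Sportiello \cite{CS}, so there is no internal argument to compare yours against. What you have written is a faithful high-level outline of the strategy of that cited proof --- reduce stationarity of the irreducible aperiodic chain to the single eigenvector identity $\sum_i e_i\Psi = 2N\Psi$ for $\Psi_\pi = A(N;\pi)$, then realize the Temperley--Lieb moves on actual configurations via Wieland's gyration and a localized, plaquette-by-plaquette version of it. The reduction step is correct and standard; the rotational invariance of $\Psi$ via full gyration is also a genuine, known consequence of \cite{wieland}.

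However, as a proof the proposal has a real gap, and you identify it yourself: the claim that the localized gyration sweep ``realizes exactly the action of $e_i$'' on couplings --- including the fixed-point case $(i,i+1)\in\pi$ and the verification that the resulting correspondence has the right multiplicities --- is asserted, not established, and it is precisely the hard content of Cantini and Sportiello's argument. Moreover, the final assembly is subtler than the double counting you describe: one does not simply check that each FPL with coupling $\sigma$ is hit $2N$ times; the actual proof establishes the identity $\sum_i (e_i - \mathrm{id})\Psi = 0$ through a delicate telescoping of local contributions over the $2N$ positions, and the individual summands $(e_i-\mathrm{id})\Psi$ are \emph{not} separately zero, so no position-by-position bijection can exist. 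Until the local lemma is proved and the summation mechanism is made precise, what you have is a correct plan for reproving the theorem along the lines of \cite{CS}, not a proof.
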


\subsection{HTFPLs -- Punctured and slit couplings}

An FPL is said to be {\em half-turn symmetric} if it is invariant under the central symmetry
of the square grid. It is easy to observe that such HTFPLs do exist whatever the parity of the size $N$.
Let us denote by $A_{HT}(N)$ the number of HTFPLs of size $N$.

HTFPLs, be they of even or odd size, have couplings that are invariant
under a half-turn rotation: if their size is $L$ and the coupling
contains an edge $(i,j)$, it must also contain $(i+L,j+L)$.

For odd $L$, parity and planarity considerations immediately imply
that the coupling must contain exactly one diameter edge of the form
$(i,i+L)$, with the endpoints $i+1$ to $i+L-1$ organized into a normal
coupling (and endpoints $L+i+1$ to $i-1$ organized into a translated
version of the same). Such a coupling of size $2L$ can be represented
more compactly as a ``slit'' coupling of (odd) size $L$, where the
diameter edge becomes a singleton $(i)$ and each pair of edges $(j,k)$
and $(j+L,k+L)$ becomes a single $(j,k)$ edge. Graphically, this
corresponds to a classical coupling of size $L-1$ with an added single
vertex (which we represent by a half-edge leading inside the circle).

For even $L$, no diagonal edge can exist for parity reasons. Instead,
HT-symmetric couplings of size $2L$ can be represented as classical
plane couplings of size $L$ drawn on a punctured disk (or
half-cylinder) instead of a full disk.

Figure \ref{fig:HTcoupling} shows examples of  half-turn symmetric couplings
respectively of odd (left) and even (right) size.

\begin{figure}[ht]
\begin{tabular}{cc}
	\epsfig{file=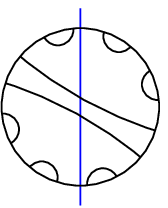, scale=2}
&
	\epsfig{file=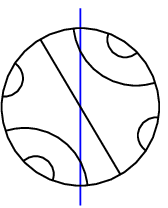, scale=2}
\end{tabular}
\caption{Half-turn symmetric couplings  of odd (left) and even (right) size}
\label{fig:HTcoupling}
\end{figure}

Let us denote by $A_{HT}(N; \pi)$ the number of HTFPLs which have $\pi$ 
as coupling.

Similarly to the asymmetric case, 
and for $N \ge 2$, we consider the $N$ “symmetrized” operators 
\begin{equation}
e'_i = ei e_{i+N}.  
\end{equation}

These operators act on the couplings of HTFPLs of size $N$, we may define a Markov chain on the set of half-turn
symmetric couplings. The assertion analogous to Theorem \ref{theo:RSCS} is due to de Gier \cite{dG} and may be stated as follows.

\begin{conjecture}\label{conj:dG}{\em[de Gier]}
The stationary distribution for couplings of size $N$ is
\begin{equation}\label{eq:dG}
\mu_{HT}(\pi) = \frac{A_{HT}(N; \pi)} {A_{HT}(N)}.
\end{equation}
\end{conjecture}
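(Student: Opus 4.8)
The plan is to prove Conjecture~\ref{conj:dG} by transporting the combinatorial strategy that Cantini and Sportiello \cite{CS} used for the asymmetric case (Theorem~\ref{theo:RSCS}) to the half-turn symmetric setting. Since the Markov chain built from the symmetrized generators $e'_i = e_i e_{i+N}$ is irreducible and aperiodic, its stationary distribution is the unique (normalized) left eigenvector of the transition operator $P = \frac1N\sum_{i=1}^N e'_i$ for eigenvalue $1$; equivalently, it is the kernel of the half-turn Hamiltonian $H_{HT} = \sum_{i=1}^N (1-e'_i)$. Writing $\Psi^{HT}_\pi := A_{HT}(N;\pi)$ for the vector of HTFPL counts indexed by half-turn symmetric couplings $\pi$, the whole conjecture reduces to the single identity $H_{HT}\,\Psi^{HT} = 0$, after which uniqueness of the ground state and positivity force $\mu_{HT} = \Psi^{HT}/\sum_\pi \Psi^{HT}_\pi = \Psi^{HT}/A_{HT}(N)$.

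First I would establish the rotational invariance of $\Psi^{HT}$, namely $A_{HT}(N;\pi) = A_{HT}(N;r\pi)$ where $r$ denotes the elementary rotation of a half-turn coupling on the punctured disk (even $N$) or the slit disk (odd $N$). In the asymmetric case this is exactly Wieland's theorem \cite{wieland}: gyration is a bijection on FPLs of size $N$ that rotates the coupling by one notch. The natural tool here is a \emph{half-turn equivariant gyration}, obtained by letting Wieland's diagonal-flip operation act simultaneously on the two centrally symmetric halves of the grid, so that it commutes with the central symmetry and hence restricts to a bijection of HTFPLs. One must then verify that this equivariant operation descends, on couplings, to the single-notch rotation $r$ on the punctured (respectively slit) disk.

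The core of the argument is the local eigenvector relation. As in \cite{CS}, for each index $i$ I would partition HTFPLs according to the local configuration of edges around the two symmetric sites $i$ and $i+N$, and exhibit a weight-preserving matching between the FPLs counted by $e'_i\Psi^{HT}$ and those counted by $\Psi^{HT}$, summed over $i$. Concretely, applying $e'_i$ means composing the two elementary reconnections $e_i$ and $e_{i+N}$; the Cantini--Sportiello ``three-configuration'' relation must be promoted to a half-turn symmetric version in which the two reconnections are performed coherently on the two halves. Together with the rotational invariance from the previous step, this should collapse $\sum_i(1-e'_i)\Psi^{HT}$ to zero.

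The hard part --- and the reason this remains a conjecture rather than a theorem --- will be the behaviour of the equivariant gyration and of the local relations near the \emph{center} of the grid: the puncture in the even case and the diameter edge in the odd case. Wieland's argument and the Cantini--Sportiello matching are both genuinely local and exploit the simple disk topology of ordinary couplings; on the punctured disk / half-cylinder the gyration orbits and the reconnection rules for $e'_i$ behave differently precisely where the two symmetric halves meet, and it is by no means automatic that the equivariant gyration still induces a clean single-notch rotation, nor that the local matching closes up across the puncture. Controlling this central interaction --- and in particular proving that the symmetrization $e'_i = e_i e_{i+N}$ is compatible with a half-turn Wieland bijection --- is the crux on which a complete proof of Conjecture~\ref{conj:dG} hinges.
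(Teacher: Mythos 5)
There is a genuine gap here, and in fact the paper itself offers nothing to compare against: the statement you are trying to prove is stated in the paper as Conjecture~\ref{conj:dG} (de Gier's conjecture), is given no proof, and is only ever \emph{assumed} later (``in light of (\ref{eq:RS}) and assuming (\ref{eq:dG})\dots''). So the benchmark is not a proof in the paper but the fact that, at the time of writing, no proof was known.

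Your proposal is a reasonable research programme rather than a proof, and you say so yourself. The first reduction (stationary distribution $=$ kernel of $H_{HT}=\sum_i(1-e'_i)$, then uniqueness and positivity via Perron--Frobenius) is sound, and the rotational-invariance step via a half-turn equivariant gyration is plausible --- Wieland's gyration does commute with the central symmetry and restricts to HTFPLs. But the entire content of the theorem lives in your third step, where you write that a half-turn symmetric version of the Cantini--Sportiello matching ``should collapse $\sum_i(1-e'_i)\Psi^{HT}$ to zero.'' No such matching is constructed. The Cantini--Sportiello proof of Theorem~\ref{theo:RSCS} is not a purely local three-configuration identity that one can symmetrize term by term: it rests on a delicate global bookkeeping over gyration orbits, and the operator $e'_i=e_ie_{i+N}$ is a \emph{product} of two reconnections whose interaction across the puncture (even $N$) or the diameter edge (odd $N$) is exactly the part you flag as unresolved. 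Identifying the obstacle is not the same as overcoming it; as written, the argument establishes nothing beyond the (correct) reduction of the conjecture to the eigenvector identity $H_{HT}\Psi^{HT}=0$, which is where the conjecture already stood.
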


\section{Even-sized HTFPLs with rare couplings}\label{sec:even}

\subsection{A general formula}

When viewed as ``punctured'' plane couplings, the couplings of
even-sized HTFPLs have a natural projection to ``normal'' plane
couplings of half their size - the projection corresponds to simply
forgetting the puncture. What is more important, this projection
commutes with the $e_i$ and $e'_i$ operators: if $\pi'$ is a punctured
plane coupling and $p$ is the projection from punctured to normal
plane couplings, one has $p(e'_i(\pi')) = e_i(p(\pi'))$.

An immediate consequence is that the eigenvector for the $H'$
Hamiltonian must project to the eigenvector for $H$. In terms of FPL
and HTFPL enumerations, in light of (\ref{eq:RS}) and assuming
(\ref{eq:dG}), this becomes, for any coupling $\pi$:
\begin{equation}\label{eq:refined}
  \frac{A(n;\pi)}{A(n)} = \sum_{\pi'} \frac{A_{HT}(2n;\pi')}{A_{HT}(2n)},
\end{equation}
where the sum in the right-hand side extends to all punctured
couplings $\pi'$ such that $p(\pi')=\pi$.

Now, it is known that $A_{HT}(2n) = P_{SC}(2n) A(n)$, where
$P_{SC}(2n)$ denotes the number of cyclically symmetric plane
partitions of size $2n$. Thus, (\ref{eq:refined}) is equivalent to
\begin{equation}\label{eq:ref}
  \sum_{\pi'} A_{HT}(2n;\pi') = P_{SC}(2n) A(n;\pi)
\end{equation}
with the same convention on the summation.

\subsection{The case of the rarest coupling}

When $\pi$ is one of the rotated versions of the rarest coupling, one
has $A(n;\pi)=1$ and (\ref{eq:ref}) simplifies accordingly. Our first
result is a bijective proof of this special case of equation (\ref{eq:ref}).

\begin{theorem}
For any integer $n$, there exists a bijection between the set of
HTFPLs of size $2n$ whose coupling is a punctured version $\pi_{0,n}$,
and cyclically symmetric plane partitions of size $2n$.
\end{theorem}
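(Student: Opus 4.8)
The plan is to exploit the extreme rigidity of the rainbow coupling $\pi_{0,n}$ to peel a half-turn symmetric FPL down to a tiling object, and then to recognize that object as a cyclically symmetric plane partition; this will re-prove the instance $\sum_{\pi'}A_{HT}(2n;\pi')=P_{SC}(2n)$ of \eqref{eq:ref} directly, without appealing to Conjecture \ref{conj:dG}.

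First, I would determine the frozen part of an arbitrary HTFPL $F$ of size $2n$ whose coupling projects onto $\pi_{0,n}$. Since $A(n;\pi_{0,n})=1$, the size-$n$ rainbow FPL is entirely frozen, and I would transport this local analysis to $F$ by following its open paths inward from the boundary: the nested (parallel) structure of the arches forces the incident edges layer by layer, and the half-turn symmetry makes this freezing occur identically on the two halves of the grid. The output of this step is an explicit frozen frame together with a complementary active region $R$, on which the edges of $F$ are not yet determined by the coupling.

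Second, I would translate the admissible FPL states on $R$ into rhombus tilings, equivalently families of non-intersecting lattice paths, of a planar region read off from $R$, using the standard local dictionary between the two states of an undetermined FPL vertex and the choices in a lozenge tiling. Because $F$ is determined by its restriction to a half-turn fundamental domain, it suffices to record the tiling on that domain; the point to verify is that the boundary data there, inherited from the frozen frame, yield exactly the path boundary conditions attached to the hexagonal region underlying cyclically symmetric plane partitions. Finally, I would invoke the classical correspondence between lattice-path families of this shape and cyclically symmetric plane partitions of size $2n$, so that composing the two explicit maps---and their inverses, which reconstruct the frozen frame from a tiling and unfold by the half-turn---yields the desired bijection.

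I expect the main obstacle to lie in the first two steps: precisely delimiting the active region $R$ and proving that the boundary conditions it induces match exactly those of the hexagon whose tilings are the cyclically symmetric plane partitions. Correctly matching the half-turn symmetry of $F$ to the relevant feature of the plane partition, and accounting for the puncture as the distinguished center of the configuration, is the delicate point on which bijectivity ultimately rests.
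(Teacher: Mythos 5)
Your outline follows the same route as the paper: determine the fixed edges, identify the surviving HTFPLs with half-turn-symmetric perfect matchings of the graph of non-fixed edges, and recognize the result as lozenge tilings counted by cyclically symmetric plane partitions. Two of the steps you defer as ``obstacles'' are, however, exactly where the content of the proof lies, and your description of them is not yet on firm ground. First, the freezing mechanism. The object whose coupling is constrained is the size-$2n$ HTFPL, viewed as an FPL of size $2n$ with a plane coupling on $4n$ points; the fact that the size-$n$ rainbow FPL is rigid does not ``transport'' to it. What you must do is enumerate the punctured couplings projecting to $\pi_{0,n}$ --- there are $n+1$ of them, $\pi'_{k,n}=\{\{i,4n+1-i\}\}_{i\leq k}\cup\{\{i,2n+1-i\}\}_{k<i\leq n}$ plus half-turn images --- and observe that this family is characterized by having all its short links among the four positions $\{1,4n\},\{n,n+1\},\{2n,2n+1\},\{3n,3n+1\}$. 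The fixed-edge technique applied to this \emph{common} short-link data yields a frozen set valid for the whole family at once (four triangular regions cut out by the diagonals of the square, with every vertex meeting exactly one fixed edge); freezing ``layer by layer'' along the arches of a single nested coupling would give a different, larger frozen set and would not produce the union you need. You also need the converse --- every configuration extending this frame has one of the $\pi'_{k,n}$ as coupling --- to get surjectivity.

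Second, the symmetry bookkeeping. Your plan to ``record the tiling on a half-turn fundamental domain'' and match boundary conditions glosses over the punchline: the quotient of the non-fixed-edge graph by the half-turn is isomorphic to the quotient of a honeycomb hexagon of side $2n$ by a rotation of order $3$. Thus half-turn-symmetric matchings of the square-grid graph correspond to \emph{third-turn}-invariant matchings of the hexagon, i.e.\ to lozenge tilings invariant under a $2\pi/3$ rotation, which are precisely the cyclically symmetric plane partitions of size $2n$. The order of the symmetry changes from $2$ to $3$ in this identification; a matching of fundamental domains that does not account for this will not land on the correct symmetry class (and a lattice-path argument on the raw half-turn domain would count something else). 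Neither gap is fatal --- both are filled by explicit verifications --- but as written the proposal does not yet contain the two observations that make the bijection work.
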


\begin{proof}
  The first thing to do is identify exactly which punctured couplings
  project to $\pi_{0,n}$. As plane couplings of size $4n$, these must
  link $1$ to either $2n$ or $4n$, and $2n+1$ with the other, and more
  generally, for each $1\leq k\leq n$, $k$ must be linked with either
  $2n+1-k$ or $4n+1-k$, and $2n+k$ must be linked with the other. If
  we add the noncrossing condition, we obtain a full description of
  the $n+1$ possible couplings:
  \begin{displaymath}
    \pi'_{k,n} = \{ \{i,4n+1-i\}_{1\leq i\leq k}\} \cup \{\{i,2n+1-i\}_{k<i\leq n}\}
  \end{displaymath}
  where $k$ ranges from $0$ to $n$.

Now, the important property of this set of plane couplings is that
they are exactly \emph{all} plane couplings of size $4n$ whose short
links are among $\{1,4n\},\{n,n+1\},\{2n,2n+1\},\{3n,3n+1\}$ - in
fact, except for $\pi'_{0,n}$ and $\pi'_{n,n}$, these are exactly all
short links of each $\pi'_{k,n}$. On the corresponding (HT)FPLs, this
translates into exactly the same set of {\em fixed edges} 
(we refer to \cite{CKLN} for the presentation of the fixed edges technique): 

\begin{itemize}
\item all eastbound edges from odd vertices in the $(i>j,i+j<2n-1)$
  area;
\item all edges obtained by rotations from the previous: northbound
  from even vertices in the $(i>j,i+j>2n-1)$ area, westbound from odd
  vertices with $(i<j,i+j>2n-1)$, and southbound from even vertices
  with $(i<j,i+j<2n-1)$.
\end{itemize}

The fixed edges for size $12$ are shown in Figure~\ref{fig:fixed_even} (left).

\begin{figure}[ht]
  \begin{tabular}{cc}
\psset{unit=4mm}
    \begin{pspicture}(0,0)(13,13)
\multido{\i=2+2}{6}{\rput(0,\i){\horedge}}
\multido{\i=3+2}{5}{\rput(1,\i){\horedge}}
\multido{\i=4+2}{4}{\rput(2,\i){\horedge}}
\multido{\i=5+2}{3}{\rput(3,\i){\horedge}}
\multido{\i=6+2}{2}{\rput(4,\i){\horedge}}
\rput(5,7){\horedge}
\multido{\i=1+2}{6}{\rput(12,\i){\horedge}}
\multido{\i=2+2}{5}{\rput(11,\i){\horedge}}
\multido{\i=3+2}{4}{\rput(10,\i){\horedge}}
\multido{\i=4+2}{3}{\rput(9,\i){\horedge}}
\multido{\i=5+2}{2}{\rput(8,\i){\horedge}}
\rput(7,6){\horedge}
\multido{\i=1+2}{6}{\rput(\i,0){\vertedge}}
\multido{\i=2+2}{5}{\rput(\i,1){\vertedge}}
\multido{\i=3+2}{4}{\rput(\i,2){\vertedge}}
\multido{\i=4+2}{3}{\rput(\i,3){\vertedge}}
\multido{\i=5+2}{2}{\rput(\i,4){\vertedge}}
\rput(6,5){\vertedge}
\multido{\i=2+2}{6}{\rput(\i,12){\vertedge}}
\multido{\i=3+2}{5}{\rput(\i,11){\vertedge}}
\multido{\i=4+2}{4}{\rput(\i,10){\vertedge}}
\multido{\i=5+2}{3}{\rput(\i,9){\vertedge}}
\multido{\i=6+2}{2}{\rput(\i,8){\vertedge}}
\rput(7,7){\vertedge}
\psline[linestyle=dotted](.5,.5)(12.5,12.5)
\psline[linestyle=dotted](.5,12.5)(12.5,.5)
\end{pspicture}
&\ \ 
\psset{unit=4mm}
\begin{pspicture}(11,13)
\rput(0,1){
\multido{\i=0+1}{11}{\rput(0,\i){\vertedge}}
\multido{\i=1+1}{10}{\rput(1,\i){\vertedge}}
\multido{\i=2+1}{8}{\rput(2,\i){\vertedge}}
\multido{\i=3+1}{6}{\rput(3,\i){\vertedge}}
\multido{\i=4+1}{4}{\rput(4,\i){\vertedge}}
\multido{\i=5+1}{2}{\rput(5,\i){\vertedge}}
\rput(2,0){\vertedge}
\rput(4,0){\vertedge}
\rput(3,1){\vertedge}
\rput(5,1){\vertedge}
\rput(4,2){\vertedge}
\rput(5,3){\vertedge}
\rput(5,8){\vertedge}
\rput(4,9){\vertedge}
\rput(3,10){\vertedge}
\rput(5,10){\vertedge}
\psline(0,0)(5.5,0)
\psline(0,1)(5.5,1)
\psline(1,2)(5.5,2)
\psline(2,3)(5.5,3)
\psline(3,4)(5.5,4)
\psline(4,5)(5.5,5)
\psline(5,6)(5.5,6)
\psline(4,7)(5.5,7)
\psline(3,8)(5.5,8)
\psline(2,9)(5.5,9)
\psline(1,10)(5.5,10)
\psline(0,11)(5.5,11)
\multido{\i=3+2}{4}{\rput(0,\i){\east}}
\multido{\i=4+2}{3}{\rput(1,\i){\east}}
\multido{\i=5+2}{2}{\rput(2,\i){\east}}
\rput(3,6){\east}
\rput(1,0){\horedge}\rput(3,0){\horedge}
\rput(4,0){\horedge}\rput(2,11){\horedge}
\rput(4,11){\horedge}
\psarc(5.5,5.5){.5}{-90}{90}
\psarc(5.5,5.5){1.5}{-90}{90}
\psarc(5.5,5.5){2.5}{-90}{90}
\psarc(5.5,5.5){3.5}{-90}{90}
\psarc(5.5,5.5){4.5}{-90}{90}
\psarc(5.5,5.5){5.5}{-90}{90}
}
\end{pspicture}

  \end{tabular}
  \caption{Fixed and non-fixed edges in even size}
  \label{fig:fixed_even}
\end{figure}
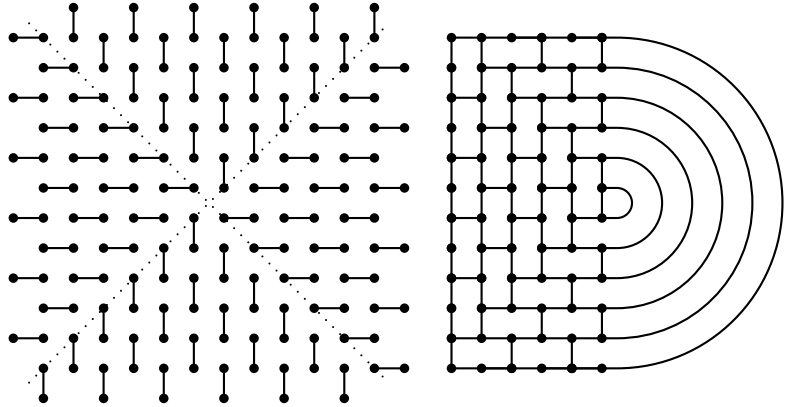

It is easy to check that all HTFPLs with these edges will have one of
the $\pi'_{k,n}$ as their coupling; more precisely, though it is not
important for our purpose, any FPL, whether half-turn-symmetric or
not, will have such a coupling.

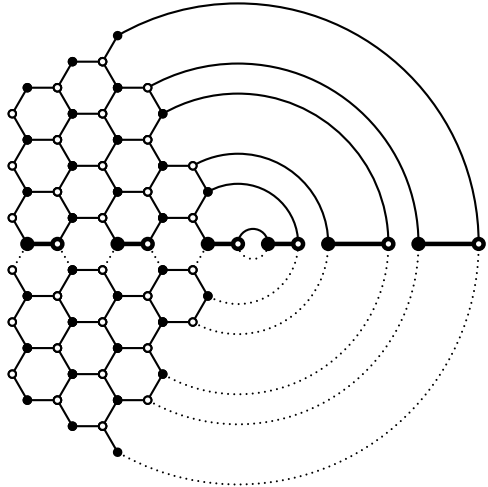
\begin{figure}[ht]
\psset{unit=4mm
,dotsep=1.5pt
}
\begin{pspicture}(16,16)
\rput(8,8){

\psarc[linestyle=dotted](.5,0){.5}{-180}{0}
\psarc(.5,0){.5}{0}{180}
\psarc[linestyle=dotted](0,0){2}{-120}{0}
\psarc(0,0){2}{0}{120}
\psarc[linestyle=dotted](0,0){3}{-120}{0}
\psarc(0,0){3}{0}{120}
\psarc[linestyle=dotted](0,0){5}{-120}{0}
\psarc(0,0){5}{0}{120}
\psarc[linestyle=dotted](0,0){6}{-120}{0}
\psarc(0,0){6}{0}{120}
\psarc[linestyle=dotted](0,0){8}{-120}{0}
\psarc(0,0){8}{0}{120}
\psline[linewidth=.15]{o-*}(2,0)(1,0)
\psline[linewidth=.15]{o-*}(5,0)(3,0)
\psline[linewidth=.15]{o-*}(8,0)(6,0)

\rput(1;300){\rput(-10,0){\rput(5;300){\nwest}}
\rput(-9,0){\rput(3;300){\nwest}\rput(6;300){\nwest}}
\rput(-8,0){\multido{\i=1+3}{3}{\rput(\i;300){\nwest}}}
\rput(-7,0){\multido{\i=-1+3}{3}{\rput(\i;300){\nwest}}}
\rput(-6,0){\psline{*-o}(3;120)(4;120)\psline{*-o}(3;300)(2;300)
  \psline[linestyle=dotted]{*-o}(0,0)(1;120)}
\rput(-5,0){\multido{\i=-5+3}{4}{\rput(\i;300){\nwest}}}
\rput(-4,0){\multido{\i=-4+3}{3}{\rput(\i;300){\nwest}}}
\rput(-3,0){\multido{\i=-6+3}{2}{\rput(\i;300){\nwest}}}
\rput(-3,0){\psline[linestyle=dotted]{o-*}(1;120)(0,0)}
\rput(-2,0){\multido{\i=-5+3}{3}{\rput(\i;300){\nwest}}}
\rput(-1,0){\multido{\i=-7+3}{3}{\rput(\i;300){\nwest}}}
\multido{\i=-6+3}{2}{\rput(\i;300){\nwest}}
}

\rput(7;120){\rput(-1,0){\west}}
\rput(6;120){\rput(-3,0){\west}\west}
\rput(5;120){\rput(-2,0){\west}}
\rput(4;120){\rput(-1,0){\west}\rput(-4,0){\west}}
\rput(3;120){\rput(-3,0){\west}\west}
\rput(2;120){\rput(-2,0){\west}\rput(-5,0){\west}}
\rput(1;120){\rput(-1,0){\west}\rput(-4,0){\west}}
\multido{\i=-6+3}{3}{\rput(\i,0){\psline[linewidth=.15]{o-*}(0,0)(-1,0)}}
\rput(7;240){\rput(-1,0){\west}}
\rput(6;240){\rput(-3,0){\west}\west}
\rput(5;240){\rput(-2,0){\west}}
\rput(4;240){\rput(-1,0){\west}\rput(-4,0){\west}}
\rput(3;240){\rput(-3,0){\west}\west}
\rput(2;240){\rput(-2,0){\west}\rput(-5,0){\west}}
\rput(1;240){\rput(-1,0){\west}\rput(-4,0){\west}}

\rput(-10,0){\rput(5;60){\neast}}
\rput(-9,0){\rput(3;60){\neast}\rput(6;60){\neast}}
\rput(-8,0){\multido{\i=1+3}{3}{\rput(\i;60){\neast}}}
\rput(-7,0){\multido{\i=2+3}{2}{\rput(\i;60){\neast}}}
\rput(-7,0){\psline[linestyle=dotted]{*-o}(0,0)(1;240)}
\rput(-6,0){\multido{\i=-3+3}{3}{\rput(\i;60){\neast}}}
\rput(-5,0){\multido{\i=-5+3}{4}{\rput(\i;60){\neast}}}
\rput(-4,0){\psline{o-*}(4;240)(3;240)\psline{o-*}(2;60)(3;60)\psline[linestyle=dotted]{o-*}(1;240)(0,0)}
\rput(-3,0){\multido{\i=-6+3}{3}{\rput(\i;60){\neast}}}
\rput(-2,0){\multido{\i=-5+3}{3}{\rput(\i;60){\neast}}}
\rput(-1,0){\multido{\i=-7+3}{2}{\rput(\i;60){\neast}}}
\rput(-1,0){\psline[linestyle=dotted]{o-*}(1;240)(0,0)}
\multido{\i=-6+3}{2}{\rput(\i;60){\neast}}

}
\end{pspicture}

  \caption{Honeycomb lattice version of the graph in Figure~\ref{fig:fixed_even}}
  \label{fig:honey_12}
\end{figure}

Thus, our problem becomes that of finding a bijection between the set
of HTFPLs with this set of fixed edges, and CSPPs of size $2n$. This
is relatively straightforward: since each vertex in the grid is
incident to one fixed edge, these (HT)FPLs are in a natural bijection
with the (half-turn-symmetric) perfect matchings of the subgraph of
non-fixed edges. Taking symmetry into account corresponds to taking
the quotient of the graph under the half-turn-symmetry, and it is easy
to check that this quotient graph is also the quotient under a
$2\pi/3$-rotation of a hexagonal region (of size $2n$) of the
honeycomb lattice. In other words, the HTFPLs with these fixed edges
are in bijection with the perfect matchings of the honeycomb lattice
that are invariant under a third-turn rotation - or, taking the dual,
lozenge tilings of a regular hexagon (of side $2n$) that are invariant
under a rotation of order $3$, that is, cyclically symmetric plane
partitions of size $2n$.
\end{proof}

\section{Rare couplings in odd size}\label{sec:odd}

\subsection{Factorization}

While there is an easy way to project couplings of HTFPLs of odd size
$2n+1$ to those of FPLs of size $n$ (by ``unslitting'' them), this
projection, contrary to the even sized case, does not commute with the
$e_i$ and $e'_i$ operators, so that (\ref{eq:RS}) and (\ref{eq:dG})
together do not have a ``nice'' consequences on the numbers
$A_{HT}(2n+1;\pi')$ and $A(n;\pi)$. Still, applying the fixed edges
technique to some sets of HTFPLs whose couplings are a slit version of
the rarest coupling does lead to intriguing enumerative results.

The slit couplings we are looking for are all those with at most two
short edges, \textit{i.e.} of the form (for some $0\leq k\leq n+1$)
\begin{displaymath}
  \left\{\{i,2n+2-i\}_{1\leq i\leq k}\right\}\cup \left\{\{k\}\right\} \cup 
  \left\{\{i,2n+3-i\}_{k+1\leq i\leq n+1}\right\}
\end{displaymath}
or rotated from this form. In extended form, these are all
HT-symmetric couplings with at most $4$ short edges that are
restricted to be in positions $(i,i+1)$, $(n+i+1,n+i+2)$,
$(2n+i+1,2n+i+2)$ and $(3n+i+2,3n+i+3)$ for some $i$.

If we use the case $i=0$ above (that is, we allow short edges
$(4n+2,1)$, $(n+1,n+1)$, $(2n+1,2n+2)$ and $(3n+2,3n+3)$), we get a
large set of fixed edges that is similar to what we got in the
even-sized case:
\begin{itemize}
\item eastbound edges from odd vertices in the $(i>j,i+j<2n)$ area,
  and westbound edges from odd vertices in the symmetric
  $(i<j,i+j>2n)$ area;
\item northbound edges from even vertices in the $(i>j,i+j>2n)$ area,
  and southbound edges from even vertices in the symmetric
  $(i<j,i+j<2n)$.
\end{itemize}

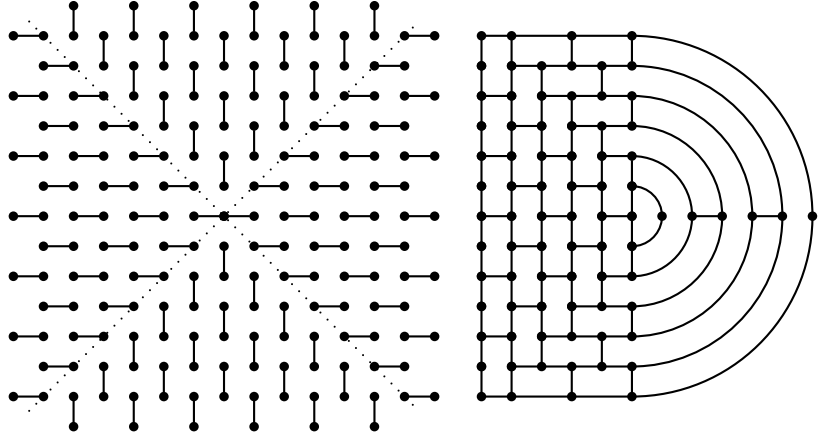
\begin{figure}[ht]
  \begin{tabular}{cc}
\psset{unit=4mm}
    \begin{pspicture}(14,14)
\multido{\i=1+2}{7}{\rput(0,\i){\horedge}\rput(13,\i){\horedge}}
\multido{\i=2+2}{6}{\rput(1,\i){\horedge}\rput(12,\i){\horedge}}
\multido{\i=3+2}{5}{\rput(2,\i){\horedge}\rput(11,\i){\horedge}}
\multido{\i=4+2}{4}{\rput(3,\i){\horedge}\rput(10,\i){\horedge}}
\multido{\i=5+2}{3}{\rput(4,\i){\horedge}\rput(9,\i){\horedge}}
\multido{\i=6+2}{2}{\rput(5,\i){\horedge}\rput(8,\i){\horedge}}
\rput(6,7){\horedge}\rput(7,7){\horedge}
\multido{\i=2+2}{6}{\rput(\i,0){\vertedge}\rput(\i,13){\vertedge}}
\multido{\i=3+2}{5}{\rput(\i,1){\vertedge}\rput(\i,12){\vertedge}}
\multido{\i=4+2}{4}{\rput(\i,2){\vertedge}\rput(\i,11){\vertedge}}
\multido{\i=5+2}{3}{\rput(\i,3){\vertedge}\rput(\i,10){\vertedge}}
\multido{\i=6+2}{2}{\rput(\i,4){\vertedge}\rput(\i,9){\vertedge}}
\rput(7,5){\vertedge}\rput(7,8){\vertedge}
\psline[linestyle=dotted](.5,.5)(13.5,13.5)
\psline[linestyle=dotted](.5,13.5)(13.5,.5)
\end{pspicture}
&\ \ 
\psset{unit=4mm}
\begin{pspicture}(12,14)
\rput(0,1){
\multido{\i=0+1}{12}{\rput(0,\i){\vertedge}\rput(1,\i){\vertedge}}
\multido{\i=1+1}{10}{\rput(2,\i){\vertedge}}
\multido{\i=2+1}{8}{\rput(3,\i){\vertedge}}
\multido{\i=3+1}{6}{\rput(4,\i){\vertedge}}
\multido{\i=4+1}{4}{\rput(5,\i){\vertedge}}
\rput(3,0){\vertedge}\rput(3,11){\vertedge}
\rput(5,0){\vertedge}\rput(5,11){\vertedge}
\rput(4,1){\vertedge}\rput(4,10){\vertedge}
\rput(5,2){\vertedge}\rput(5,9){\vertedge}
\psline(0,0)(5,0)\psline(0,12)(5,12)
\psline(1,1)(5,1)\psline(1,11)(5,11)
\psline(2,2)(5,2)\psline(2,10)(5,10)
\psline(3,3)(5,3)\psline(3,9)(5,9)
\psline(4,4)(5,4)\psline(4,8)(5,8)
\multido{\i=1+1}{6}{\psarc{-*}(5,6){\i}{-90}{0}\psarc(5,6){\i}{0}{90}}
\psline(7,6)(8,6)\psline(9,6)(10,6)
\multido{\i=2+2}{5}{\rput(0,\i){\east}}
\multido{\i=3+2}{4}{\rput(1,\i){\east}}
\multido{\i=4+2}{3}{\rput(2,\i){\east}}
\multido{\i=5+2}{2}{\rput(3,\i){\east}}
\psline(4,6)(5,6)
}
\end{pspicture}

  \end{tabular}
  \caption{Fixed and non-fixed edges in odd size}
  \label{fig:fixed_odd}
\end{figure}

Figure~\ref{fig:fixed_odd} shows the fixed edges, and the fundamental
domain of non-fixed edges, for size $13$ ($n=6$), and
Figure~\ref{fig:honey_13} shows the same graph of non-fixed edges as a
region of the honeycomb lattice. In the latter figure, dotted edges
are those that are ``cut'' by Ciucu's Factorization Theorem, and bold
edges are those that are given a weight $1/2$ by the same.

\begin{figure}[ht]
\psset{unit=4mm,dotsep=1.5pt}
\begin{pspicture}(16,16)
\rput(8,8){
\psarc[linestyle=dotted]{o-*}(0,0){1}{0}{120}
\psarc[linestyle=dotted]{*-o}(0,0){2}{0}{120}
\psarc[linestyle=dotted]{o-*}(0,0){4}{0}{120}
\psarc[linestyle=dotted]{*-o}(0,0){5}{0}{120}
\psarc[linestyle=dotted]{o-*}(0,0){7}{0}{120}
\psarc[linestyle=dotted]{*-o}(0,0){8}{0}{120}
\psarc{*-o}(0,0){1}{-120}{0}
\psarc{o-*}(0,0){2}{-120}{0}
\psarc{*-o}(0,0){4}{-120}{0}
\psarc{o-*}(0,0){5}{-120}{0}
\psarc{*-o}(0,0){7}{-120}{0}
\psarc{o-*}(0,0){8}{-120}{0}
\psline[linewidth=.15]{o-*}(-1,0)(-2,0)
\psline[linewidth=.15]{o-*}(-4,0)(-5,0)
\psline[linewidth=.15]{o-*}(-7,0)(-8,0)
\psline[linewidth=.15]{o-*}(4,0)(2,0)
\psline[linewidth=.15]{o-*}(7,0)(5,0)
\rput(-8,0){\psline[linestyle=dotted]{*-o}(0,0)(1;240)}
\rput(-5,0){\psline[linestyle=dotted]{*-o}(0,0)(1;240)}
\rput(-2,0){\psline[linestyle=dotted]{*-o}(0,0)(1;240)}
\rput(-7,0){\psline[linestyle=dotted]{o-*}(0,0)(1;300)}
\rput(-4,0){\psline[linestyle=dotted]{o-*}(0,0)(1;300)}
\rput(-1,0){\psline[linestyle=dotted]{o-*}(0,0)(1;300)}

\multido{\i=1+3}{3}{\rput(\i;120){\nwest}}
\rput(-1,0){\multido{\i=2+3}{2}{\rput(\i;120){\nwest}}}
\rput(-2,0){\multido{\i=0+3}{3}{\rput(\i;120){\nwest}}}
\rput(-3,0){\multido{\i=-2+3}{3}{\rput(\i;120){\nwest}}}
\rput(-4,0){\multido{\i=2+3}{2}{\rput(\i;120){\nwest}}\rput(-4;120){\nwest}}
\rput(-5,0){\multido{\i=-3+3}{3}{\rput(\i;120){\nwest}}}
\rput(-6,0){\multido{\i=-5+3}{4}{\rput(\i;120){\nwest}}}
\rput(-7,0){\rput(2;120){\nwest}\multido{\i=-7+3}{2}{\rput(\i;120){\nwest}}}
\rput(-8,0){\multido{\i=-6+3}{3}{\rput(\i;120){\nwest}}}
\rput(-9,0){\multido{\i=-8+3}{3}{\rput(\i;120){\nwest}}}
\rput(-10,0){\multido{\i=-7+3}{2}{\rput(\i;120){\nwest}}}
\rput(-11,0){\rput(-6;120){\nwest}}

\rput(1;120){\multido{\i=-5+3}{2}{\rput(\i,0){\west}}}
\rput(2;120){\multido{\i=-6+3}{3}{\rput(\i,0){\west}}}
\rput(3;120){\multido{\i=-4+3}{2}{\rput(\i,0){\west}}}
\rput(4;120){\multido{\i=-5+3}{2}{\rput(\i,0){\west}}}
\rput(5;120){\multido{\i=-3+3}{2}{\rput(\i,0){\west}}}
\rput(6;120){\multido{\i=-4+3}{2}{\rput(\i,0){\west}}}
\rput(7;120){\rput(-2,0){\west}}
\rput(8;120){\west}
\rput(1;240){\multido{\i=-5+3}{2}{\rput(\i,0){\west}}}
\rput(2;240){\multido{\i=-6+3}{3}{\rput(\i,0){\west}}}
\rput(3;240){\multido{\i=-4+3}{2}{\rput(\i,0){\west}}}
\rput(4;240){\multido{\i=-5+3}{2}{\rput(\i,0){\west}}}
\rput(5;240){\multido{\i=-3+3}{2}{\rput(\i,0){\west}}}
\rput(6;240){\multido{\i=-4+3}{2}{\rput(\i,0){\west}}}
\rput(7;240){\rput(-2,0){\west}}
\rput(8;240){\west}

\multido{\i=2+3}{3}{\rput(\i;240){\neast}}
\rput(-1,0){\multido{\i=3+3}{2}{\rput(\i;240){\neast}}}
\rput(-2,0){\multido{\i=4+3}{2}{\rput(\i;240){\neast}}}
\rput(-3,0){\multido{\i=-1+3}{3}{\rput(\i;240){\neast}}}
\rput(-4,0){\multido{\i=-3+3}{4}{\rput(\i;240){\neast}}}
\rput(-5,0){\multido{\i=-2+6}{2}{\rput(\i;240){\neast}}}
\rput(-6,0){\multido{\i=-4+3}{4}{\rput(\i;240){\neast}}}
\rput(-7,0){\multido{\i=-6+3}{4}{\rput(\i;240){\neast}}}
\rput(-8,0){\multido{\i=-5+3}{2}{\rput(\i;240){\neast}}}
\rput(-9,0){\multido{\i=-7+3}{3}{\rput(\i;240){\neast}}}
\rput(-10,0){\multido{\i=-6+3}{2}{\rput(\i;240){\neast}}}
\rput(-11,0){\rput(-5;240){\neast}}

\rput(-1,0){\neast}
}
\end{pspicture}
  
  \caption{Honeycomb version of the graph in Figure~\ref{fig:fixed_odd}}
  \label{fig:honey_13}
\end{figure}
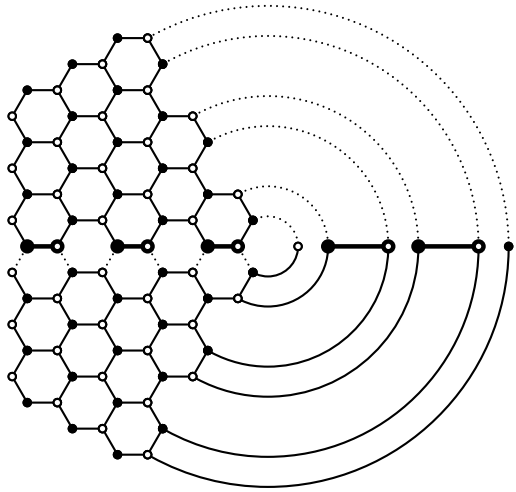

When we restrict our attention to the nonfixed edges, the
corresponding HTFPLs are in bijection with the perfect matchings of a
region $G_n$ of the honeycomb lattice as shown on Figure~\ref{fig:gn}
(where the sides of the region along the bold line must be glued
together). 

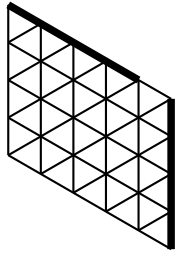
\begin{figure}[ht]
  \begin{center}
\psset{unit=5mm}
    \begin{pspicture}(0,0)(5,8)
\SpecialCoor
\rput(5,5){\multido{\i=0+1}{6}{\rput(\i;150){\psline(0,0)(0,-4)}}
   \multido{\i=0+1}{5}{\rput(\i;270){\psline(0,0)(5;150)}}
   \rput(4;150){\psline(0,0)(1;210)}
   \rput(3;150){\psline(0,0)(2;210)}
   \rput(2;150){\psline(0,0)(3;210)}
   \rput(1;150){\psline(0,0)(4;210)}
   \psline(0,0)(4;210)
   \rput(1;270){\psline(0,0)(3;210)}
   \rput(2;270){\psline(0,0)(2;210)}
   \rput(3;270){\psline(0,0)(1;210)}
\psline[linewidth=.2](1;150)(5;150)
\psline[linewidth=.2](0,0)(0,-4)
}
    \end{pspicture}
  \end{center}
\caption{The region $G_4$.}
\label{fig:gn}
\end{figure}

The region $G_n$ can be deformed to have a reflexive
symmetry as shown on
Figure \ref{fig:RR} size $4k+1$ and $4k+3$.

\begin{figure}[htbp]

\begin{tabular}{cc}

  \psset{unit=5mm}
  \begin{pspicture}(6,10)
    \SpecialCoor
    \rput(5,5){
      \rput(0,-1){\pspolygon[fillstyle=hlines](0,0)(1;30)(1;90)(1;150)}
      \rput(6;210){\multido{\i=0+1}{3}{\rput(0,\i){\pspolygon[fillstyle=hlines](0,0)(1;330)(1;30)(1;90)}}}
      \rput(6;210){\multido{\i=1+1}{3}{\rput(\i;330){\pspolygon[fillstyle=hlines](0,0)(1;330)(1;30)(1;90)}}}
      \rput(1;210){\pspolygon[fillstyle=solid,fillcolor=lightgray](0,0)(1;30)(1;90)(1;150)(0,0)}
      \rput(2;210){\rput(1;150){\pspolygon[fillstyle=solid,fillcolor=lightgray](0,0)(1;30)(1;90)(1;150)(0,0)}}
      \rput(3;210){\rput(2;150){\pspolygon[fillstyle=solid,fillcolor=lightgray](0,0)(1;30)(1;90)(1;150)(0,0)}}
      \rput(0,-2){\pspolygon[fillstyle=solid,fillcolor=lightgray](0,0)(1;90)(1;150)(1;210)(0,0)}
      \rput(0,-3){\rput(1;210){\pspolygon[fillstyle=solid,fillcolor=lightgray](0,0)(1;90)(1;150)(1;210)(0,0)}}
      \rput(6;210){\psline(0,0)(0,6)\psline(0,0)(4;330)}
      \rput(5;210){\psline(0,-1)(0,6)\psline(1;150)(3;330)}
      \rput(4;210){\psline(0,-2)(0,6)\psline(2;150)(3;330)}
      \rput(3;210){\psline(0,-3)(0,6)\psline(3;150)(2;330)}
      \rput(2;210){\psline(0,-4)(0,6)\psline(4;150)(2;330)}
      \rput(1;210){\psline(0,-3)(0,4)\psline(5;150)(1;330)}
      \psline(0,-2)(0,2)\psline(6;150)(1;330)
      \rput(0,1){\psline(5;150)(1;330)}
      \rput(0,2){\psline(4;150)(0,0)}
      \rput(0,3){\psline(3;150)(1;150)}
      \rput(6;150){\psline(0,0)(4;30)}
      \rput(5;150){\psline(1;210)(3;30)}
      \rput(4;150){\psline(2;210)(3;30)}
      \rput(3;150){\psline(3;210)(2;30)}
      \rput(2;150){\psline(4;210)(2;30)}
      \rput(1;150){\psline(5;210)(1;30)}
      \psline(6;210)(1;30)
      \rput(0,-1){\psline(5;210)(1;30)}
      \rput(0,-2){\psline(4;210)(0,0)}
      \rput(0,-3){\psline(3;210)(1;210)}
      }
    \rput(5,5){
      \psline[linewidth=.2](1;210)(0,0)(0,1)
      \rput(1;150){\psline[linewidth=.2](0,-1)(1;210)(2;210)
        \psline[linewidth=.2](1;30)(0,1)(0,2)}
      \rput(2;150){\rput(0,2){\psline[linewidth=.2](0,0)(0,1)
                              \psline[linewidth=.2](0,0)(1;330)}
                   \rput(2;210){\psline[linewidth=.2](0,0)(1;330)
                                \psline[linewidth=.2](0,0)(1;210)}}
      \rput(0,3){\psline[linewidth=.2](2;150)(3;150)}
      \rput(0,-3){\psline[linewidth=.2](6;150)(5;150)}
      \rput(0,4){$R_k$}\rput(0,-4){$R'_{k-1}$}
}
  \end{pspicture}
& 
 \psset{unit=5mm}
 \begin{pspicture}(10,10)
    \SpecialCoor
    \rput(9,5){
      \rput(6;150){\multido{\i=0+1}{4}{\rput(\i;30){\pspolygon[fillstyle=hlines](0,0)(1;90)(1;150)(1;210)}}}
      \rput(6;150){\multido{\i=1+1}{3}{\rput(\i;270){\pspolygon[fillstyle=hlines](0,0)(1;90)(1;150)(1;210)}}}
      \pspolygon[fillstyle=hlines](0,0)(1;210)(1;270)(1;330)
      \rput(1;210){\pspolygon[fillstyle=solid,fillcolor=lightgray](0,0)(1;30)(1;90)(1;150)(0,0)}
      \rput(2;210){\rput(1;150){\pspolygon[fillstyle=solid,fillcolor=lightgray](0,0)(1;30)(1;90)(1;150)(0,0)}}
      \rput(3;210){\rput(2;150){\pspolygon[fillstyle=solid,fillcolor=lightgray](0,0)(1;30)(1;90)(1;150)(0,0)}}
      \rput(0,-2){\pspolygon[fillstyle=solid,fillcolor=lightgray](0,0)(1;90)(1;150)(1;210)(0,0)}
      \rput(0,-3){\rput(1;210){\pspolygon[fillstyle=solid,fillcolor=lightgray](0,0)(1;90)(1;150)(1;210)(0,0)}}
      \rput(0,-4){\rput(2;210){\pspolygon[fillstyle=solid,fillcolor=lightgray](0,0)(1;90)(1;150)(1;210)(0,0)}}
      \rput(6;210){\psline(0,0)(0,6)\psline(0,0)(4;330)}
      \rput(5;210){\psline(0,-1)(0,6)\psline(1;150)(3;330)}
      \rput(4;210){\psline(0,-2)(0,6)\psline(2;150)(3;330)}
      \rput(3;210){\psline(0,-3)(0,6)\psline(3;150)(2;330)}
      \rput(2;210){\psline(0,-4)(0,6)\psline(4;150)(2;330)}
      \rput(1;210){\psline(0,-3)(0,4)\psline(5;150)(1;330)}
      \psline(0,-2)(0,2)\psline(6;150)(1;330)
      \rput(0,1){\psline(5;150)(1;330)}
      \rput(0,2){\psline(4;150)(0,0)}
      \rput(0,3){\psline(3;150)(1;150)}
      \rput(6;150){\psline(0,0)(4;30)}
      \rput(5;150){\psline(1;210)(3;30)}
      \rput(4;150){\psline(2;210)(3;30)}
      \rput(3;150){\psline(3;210)(2;30)}
      \rput(2;150){\psline(4;210)(2;30)}
      \rput(1;150){\psline(5;210)(1;30)}
      \psline(6;210)(1;30)
      \rput(0,-1){\psline(5;210)(1;30)}
      \rput(0,-2){\psline(4;210)(0,0)}
      \rput(0,-3){\psline(3;210)(1;210)}
      \rput(7;150){
        \psline(1;330)(0,0)(4;30)
        \multido{\i=1+1}{4}{\rput(\i;30){\psline(0,-1)(0,0)(1;330)}}}
      \rput(7;210){
        \psline(0,0)(4;330)
        \multido{\i=1+1}{4}{\rput(\i;330){\psline(0,1)(0,0)(1;30)}}
        \psline(1;30)(0,0)(0,7)
        \multido{\i=1+1}{6}{\rput(0,\i){\psline(1;30)(0,0)(1;330)}}}
      }
    \rput(9,5){
      \psline[linewidth=.2](1;210)(0,0)(0,1)
      \rput(1;150){\psline[linewidth=.2](0,-1)(1;210)(2;210)
        \psline[linewidth=.2](1;30)(0,1)(0,2)}
      \rput(2;150){\rput(0,2){\psline[linewidth=.2](0,0)(0,1)
                              \psline[linewidth=.2](0,0)(1;330)}
                   \rput(2;210){\psline[linewidth=.2](0,0)(1;330)
                                \psline[linewidth=.2](0,0)(1;210)}}
      \rput(0,3){\psline[linewidth=.2](2;150)(3;150)}
      \rput(0,-3){\psline[linewidth=.2](6;150)(5;150)}
      \rput(3;150){\psline[linewidth=.2](0,3)(0,4)
                   \psline[linewidth=.2](3;210)(4;210)}
      \rput(0,4){$R_k$}
      \rput(0,-4){$R'_{k}$}
}
  \end{pspicture}

\end{tabular}
\caption{Decomposition by symmetry of $G_6$ and $G_7$.}
\label{fig:RR}
\end{figure}
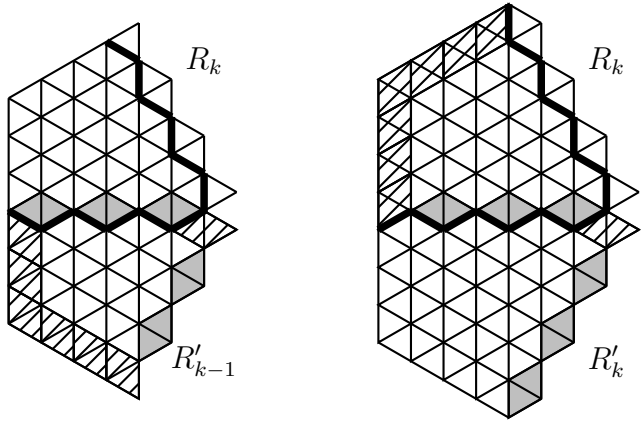

Figure~\ref{fig:RR} shows (in the triangular lattice) the result of
applying Ciucu's Factorization Theorem~\cite{ciucu1}: for each size,
we are to count the lozenge tilings of two regions of the trangular
lattice. In the figure, grayed lozenges have a weight of $1/2$
attached, and dashed lozenges are ``fixed'' in the sense that they
must appear in all lozenge tilings of the corresponding region.

Ciucu's theorem thus implies that the number $H_n$ of such HTFPLs of size $n$ is given in odd size by:
  \begin{itemize}
  \item $H_{4k+1} = 2^{2k} R_k(1/2,1) R'_{k-1}(1/2,1)$,
  \item $H_{4k+3} = 2^{2k+1} R_k(1/2,1) R'_k(1/2,1)$
  \end{itemize}

and in even size by:

  \begin{itemize}
  \item $H_{4k} = 2^{2k} R_k(1/2,1/2) R_{k-1}(1,1)$,
  \item $H_{4k+2} = 2^{2k+2} R_k(1/2,1/2) R_{k-1}(1,1)$. 
  \end{itemize}

To enumerate weighted tilings of regions $R_k$ and $R'_k$, we may use Lindström-Gessel-Viennot's \cite{GV} determinants to get:

  \begin{eqnarray*}
    R_k(x,y) &=& \det\left( m_{i,j,0}\right)_{1\leq i,j\leq k}\\
    R'_k(x,y) &=& \det\left( m_{i,j,1} \right)_{1\leq i,j\leq k}\\
m_{i,j,\ell} & = & (1\!+\!xy)\!\binom{i\!+\!j\!+\ell\!-\!2}{2i\!-\!j\!-\!1} + x\binom{i\!+\!j\!+\!\ell\!-\!2}{2i\!-\!j\!-\!2} + y\binom{i\!+\!j\!+\!\ell\!-\!2}{2i\!-\!j}
  \end{eqnarray*}

\subsection{Enumeration of certain tilings of hexagons}

When evaluating $R_k(x,y)$ and $R'_k(x,y)$ for $x$ and $y$ in  $\{1/2,1\}$, 
we are surprised to recover well-known sequences.
More generally, we define:
$$    \R_\ell(n;x,y) = \det\left( m_{i,j,\ell}\right)_{1\leq i,j\leq n}.$$
The aim of this subsection is to identify some specializations of the functions $\R_\ell(n;x,y)$ in terms of cardinality of some classes of alternating sign matrices. 

The function $\mathcal{R}_{\ell}(n;x,y)$ counts the weighted lozenge
tilings of the region shown in Figure~\ref{fig:Rl}, where grayed
lozenges carry a multiplicative weight of $x$ or $y$ as indicated.

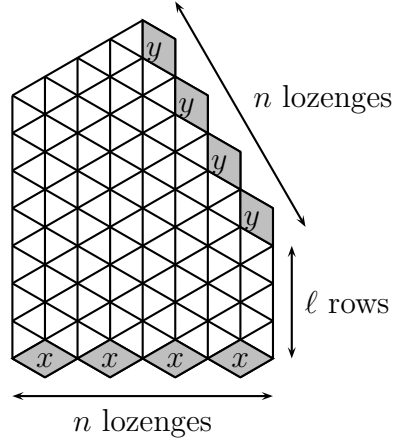
\begin{figure}[ht]
\psset{unit=5mm}
\begin{pspicture}(8,10)
  \rput(7,1){
    \psline(0,0)(0,4)
    \rput(1;210){\psline(0,0)(0,6)}
    \rput(2;210){\psline(0,1)(0,8)}
    \rput(3;210){\psline(0,1)(0,10)}
    \rput(4;210){\psline(0,2)(0,11)}
    \rput(5;210){\psline(0,2)(0,11)}
    \rput(6;210){\psline(0,3)(0,11)}
    \rput(7;210){\psline(0,3)(0,11)}
    \rput(8;210){\psline(0,4)(0,11)}

    \multido{\i=0+1}{4}{\rput(0,\i){\psline(0,0)(8;150)}}
    \rput(0,4){\psline(0,0)(7;150)}
    \rput(0,5){\psline(1;150)(6;150)}
    \rput(0,6){\psline(2;150)(5;150)}
    \rput(0,7){\psline(3;150)(4;150)}
    \rput(0,-1){\psline(1;150)(8;150)}
    \rput(0,-2){\psline(3;150)(8;150)}
    \rput(0,-3){\psline(5;150)(8;150)}
    \rput(0,-4){\psline(7;150)(8;150)}

    \rput(0,0){\psline(0,0)(1;210)}
    \rput(0,1){\psline(0,0)(3;210)}
    \rput(0,2){\psline(0,0)(5;210)}
    \rput(0,3){\psline(0,0)(7;210)}
    \rput(0,4){\psline(0,0)(8;210)}
    \rput(0,5){\psline(1;210)(8;210)}
    \rput(0,6){\psline(1;210)(8;210)}
    \rput(0,7){\psline(2;210)(8;210)}
    \rput(0,8){\psline(2;210)(8;210)}
    \rput(0,9){\psline(3;210)(8;210)}
    \rput(0,10){\psline(3;210)(8;210)}
    \rput(0,11){\psline(4;210)(8;210)}

    \psline{<->}(.5,0)(.5,3)
    \uput[0](.5,1.5){$\ell$ rows}
    \psline{<->}(-6.93,-1)(0,-1)
    \uput[270](-3.46,-1){$n$ lozenges}
    \rput(0,3){\rput(1;30){\psline{<->}(0,0)(6.93;120)\uput[30](3.46;120){$n$ lozenges}}}

    \rput(1;150){\pspolygon[fillstyle=solid,fillcolor=lightgray](0,0)(1;210)(1;270)(1;330)\uput[270](0,0){$x$}}
    \rput(2;150){\rput(1;210){\pspolygon[fillstyle=solid,fillcolor=lightgray](0,0)(1;210)(1;270)(1;330)\uput[270](0,0){$x$}}}
    \rput(3;150){\rput(2;210){\pspolygon[fillstyle=solid,fillcolor=lightgray](0,0)(1;210)(1;270)(1;330)\uput[270](0,0){$x$}}}
    \rput(4;150){\rput(3;210){\pspolygon[fillstyle=solid,fillcolor=lightgray](0,0)(1;210)(1;270)(1;330)\uput[270](0,0){$x$}}}

    \rput(0,4){
      \pspolygon[fillstyle=solid,fillcolor=lightgray](0,0)(1;150)(1;210)(1;270)\uput[210](0,0){$y$}
      \rput(1;150){\rput(0,1){\pspolygon[fillstyle=solid,fillcolor=lightgray](0,0)(1;150)(1;210)(1;270)\uput[210](0,0){$y$}}}
      \rput(2;150){\rput(0,2){\pspolygon[fillstyle=solid,fillcolor=lightgray](0,0)(1;150)(1;210)(1;270)\uput[210](0,0){$y$}}}
      \rput(3;150){\rput(0,3){\pspolygon[fillstyle=solid,fillcolor=lightgray](0,0)(1;150)(1;210)(1;270)\uput[210](0,0){$y$}}}
    }

}
\end{pspicture}
\caption{Interpretation of $\mathcal{R}_{\ell}(n;x,y)$}
\label{fig:Rl}
\end{figure}

\begin{proposition}
We have the following special values for the functions $\R$:
\begin{align}
\label{eq1}\R_0(n;1/2,1)&=A_{HT}(2n+1)\\
\label{eq2}\R_1(n;1/2,1)&=\frac{1}{2}A_{HT}(2n+2)\\
\label{eq3}\R_1(n;1,1)&=A_{V}(2n+3)\\
\label{eq4}\R_1(n;1,1/2)&=A(n)^2\\
\label{eq5}\R_2(n;1/2,1)&=A(n)A(n+1)
\end{align}
where $A_{HT}(N)$, $A_{V}(N)$ and $A(N)$ stand respectively for the number
of half-turn symmetric, vertically symmetric and unrestricted alternating sign 
matrices.
\end{proposition}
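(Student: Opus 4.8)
The plan is to treat all five identities by a single two-stage scheme: first collapse the three-term entries of $m_{i,j,\ell}$ into a one- or two-term binomial expression by Pascal's rule at the prescribed values of $(x,y)$, and then evaluate the resulting determinant by matching it with a determinant already understood in the alternating-sign-matrix literature. The first stage is mechanical. At $(x,y)=(1,1)$ the weight $1+xy=2$ makes the three binomials telescope completely: two applications of Pascal's rule give $m_{i,j,\ell}=\binom{i+j+\ell}{2i-j}$, so that
\[\R_1(n;1,1)=\det_{1\le i,j\le n}\binom{i+j+1}{2i-j},\]
a determinant of single binomial coefficients. At the mixed weights exactly one term survives with coefficient $\tfrac{1}{2}$: a short computation gives $m_{i,j,\ell}=\binom{i+j+\ell-1}{2i-j}+\tfrac{1}{2}\binom{i+j+\ell-1}{2i-j-1}$ when $(x,y)=(1/2,1)$, and $m_{i,j,1}=\binom{i+j}{2i-j-1}+\tfrac{1}{2}\binom{i+j}{2i-j}$ when $(x,y)=(1,1/2)$. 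Thus (\ref{eq1}), (\ref{eq2}) and (\ref{eq5}) are the determinants of the first mixed type with $\ell=0,1,2$, while (\ref{eq4}) is the second type.

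For (\ref{eq3}) I would finish by evaluating the single-binomial determinant $\det_{1\le i,j\le n}\binom{i+j+1}{2i-j}$. Determinants of exactly this shape are of the kind evaluated in the compendium \cite{kratt} and used by Mills-Robbins-Rumsey and Andrews to enumerate vertically symmetric objects, so once the indexing is aligned the value $A_V(2n+3)$ should drop out (the smallest cases $n=1,2$ already give $3$ and $26$, i.e. $A_V(5)$ and $A_V(7)$). Equivalently, reading the determinant through Lindström-Gessel-Viennot as a count of non-intersecting lattice paths dual to the lozenge tilings of Figure~\ref{fig:Rl} exhibits the VSASM count directly.

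For the four determinants carrying a $\tfrac{1}{2}$ weight, the useful remark is that they are exactly the weighted tiling numbers $R_k(x,y)=\R_0(k;x,y)$ and $R'_k(x,y)=\R_1(k;x,y)$ produced by Ciucu's Factorization Theorem \cite{ciucu1} in the previous subsection, so the coefficient $\tfrac{1}{2}$ is the very weight carried there by the grayed lozenges. I would evaluate each by the standard determinant-calculus route: guess the closed form from small cases and certify it by Desnanot-Jacobi (Dodgson) condensation or an explicit LU/triangularisation, checking the base case and the recurrence. The closed forms to aim for are the product formulas attached to the symmetry classes on the right-hand sides -- squares and near-squares of the ASM number $A(n)$ for (\ref{eq4}) and (\ref{eq5}), and the product formulas for half-turn symmetric ASMs for (\ref{eq1}) and (\ref{eq2}) -- up to the overall powers of $2$ that are intrinsic to these $1/2$-weighted determinants (the same powers of $2$ that appear explicitly in the factorizations $H_{4k+1},\dots,H_{4k+2}$ and in the prefactor $\tfrac{1}{2}$ of (\ref{eq2})). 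For (\ref{eq1}) and (\ref{eq2}) the cleanest route is to match the determinant to Kuperberg's determinant expressions for $A_{HT}$, which already carry $1/2$ weights of exactly this shape.

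The main obstacle is this batch of weighted evaluations, on two counts. First, because the coefficient $\tfrac{1}{2}$ sits on only one of the two surviving binomials, the matrix has non-integer entries and is not of a form one can read off directly from \cite{kratt}; the evaluation must be carried out by hand, and I expect the condensation to demand a simultaneous induction over a two-parameter family -- varying both $n$ and $\ell$, equivalently both regions $R_k$ and $R'_k$ -- since the minors occurring in Desnanot-Jacobi mix the two types. Second, one must reconcile the rational determinant values with the integer symmetry-class counts: pinning down the exact power of $2$ (and any index shift) so that each stated equality holds on the nose is delicate, and is precisely the bookkeeping that the prefactor $\tfrac{1}{2}$ in (\ref{eq2}) already signals. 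Verifying that the guessed products satisfy the condensation recurrence and that the powers of $2$ come out correctly is where the real effort lies.
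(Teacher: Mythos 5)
Your first stage is correct and worth keeping: the Pascal-rule collapses do give $m_{i,j,\ell}=\binom{i+j+\ell}{2i-j}$ at $(x,y)=(1,1)$, $\binom{i+j+\ell-1}{2i-j}+\tfrac12\binom{i+j+\ell-1}{2i-j-1}$ at $(1/2,1)$, and $\binom{i+j}{2i-j-1}+\tfrac12\binom{i+j}{2i-j}$ at $(1,1/2)$, and your numerical checks ($3$ and $26$ for \pref{eq3}) are right. The genuine gap is in the second stage: for all five identities you stop at the point where the determinant must actually be evaluated, and what you offer in its place --- ``guess the closed form from small cases and certify it by Desnanot--Jacobi'' --- is a programme, not a proof. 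You yourself flag that the condensation would require a simultaneous two-parameter induction mixing the $\ell=0$ and $\ell=1$ families and that the powers of $2$ must be tracked by hand; none of that is carried out, and it is precisely the content of the proposition. In particular nothing in the proposal establishes the product formula for $\R_\ell(n;1/2,1)$, which is what \pref{eq1} and \pref{eq2} reduce to.

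The paper avoids this work entirely by recognizing each specialization as a weighted lozenge-tiling count that has already been evaluated in closed form in the literature: $\R_\ell(n;1/2,1)$ is given as an explicit product in Krattenthaler's determinant compendium \cite{kratt}, so \pref{eq1} and \pref{eq2} follow by comparing the ratio $\R_\ell(n)/\R_\ell(n-1)$ with the known ratio of consecutive $A_{HT}$ values plus a base case; $\R_1(n;1,1)$ is the number of CSTCPPs in a hexagon with a triangular hole of size $2$ \cite{ciucu5}, matched against the product formula for $A_V(2n+1)$ from \cite{robbins,kup}; $\R_1(n;1,1/2)$ counts CSSCPPs, equal to $A(n)^2$ by \cite{kupPP}; and $\R_2(n;1/2,1)$ counts qCSSCPPs, equal to $A(n)A(n+1)$ by \cite{duchon}. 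So the honest comparison is: your reduction of the entries is a clean preliminary the paper does not bother with, but the five evaluations you defer to ``the real effort'' are exactly the theorems the paper imports. To close the gap you should either locate the relevant entries in \cite{kratt} and \cite{ciucu5} (your determinants, after your Pascal reductions, are of the forms treated there) or genuinely carry out one condensation argument in full, including the verification that the conjectured products satisfy the Desnanot--Jacobi recurrence across both parameters.
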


\Proof
It appears that the three specializations we need to interpret, namely $\R_\ell(n;1,1/2)$, $\R_\ell(n;1/2,1)$ and $\R_\ell(n;1,1)$ are computed in \cite{ciucu5,kratt}. 
We denote by $(a)_i=a(a+1)\cdots(a+i-1)$ the shifted factorial.

\medskip
\noindent {\em Proof of  \pref{eq1} and \pref{eq2}.}
We may use \cite{kratt} to write:
$$\R_\ell(n;1/2,1) = \prod_{i=0}^{n-1}(2\ell+3i) i! \frac {(\ell+i-1)! (2\ell+2i)_i (\ell+2i)_i} {(\ell+2i)! (2i)!} .$$

It is then  a simple computation to check that 
\begin{itemize}
\item for $\ell=0$: 
$$(3i) i! \frac {(i-1)! (2i)_i^2} {(2i)!^2} = \frac 4 3 \frac{{{3n}\choose{n}}^2}{{{2n}\choose{n}}^2} = \frac{A_{HT}(2n+1)}{A_{HT}(2n-1)},$$
\item for $\ell=1$: 
$$(2+3i) i! \frac {(i)! (2i+2)_i (2i+1)_i} {(2i+1)! (2i)!}  = \frac 4 3 \frac{{{3n+3}\choose{n+1}} {{3n}\choose{n}}}  {{{2n+2}\choose{n+1}} {{2n}\choose{n}}} = \frac{A_{HT}(2n)}{A_{HT}(2n)}.$$
\end{itemize}

Now to conclude, we observe that $\R_0(1;1/2,1)=3=A_{HT}(3)$ and $\R_0(1;1/2,1)=5=1/2.A_{HT}(4)$. This implies equations \pref{eq1} and \pref{eq2}.

\medskip
\noindent {\em Proof of \pref{eq3}.} 
We know from \cite{ciucu5} that $\R_1(n;1,1)$ is equal to the number of cyclically symmetric transpose-complementary plane partitions (CSTCPP) in a hexagonal region with a triangular hole of size $2$. We thus get:
$$\R_1(n;1,1)=P_{CSTC}(2n,2)=\frac{1}{2^n}\prod_{j=0}^{n-1} \frac{P_{CS}(2j+1,2)}{P_{CS}(2j,2)}$$
By using 
\begin{align*}
P_{CS}(2j+1,2)&= \frac{(-1/2)! (2j+3)_{j+1}}{(j+1/2)!}\\
&\times \prod_{i=0}^j \frac{i!^2 (2i+1)_i^2 (i+1/2)! (2i+1/2)_{i+1} (2i+1+1/2)_i}{(2i)!^2(j+i+1+1/2)!}\\
P_{CS}(2j,2)&=\frac{(-1/2)! j! (2j+1/2)_{j+1}}{(2j)! (2j+1/2)!} \\
&\times \prod_{i=0}^{j-1} \frac{i!^2 (2i+3)_{i+1}^2 (i+1/2)! (2i+1+1/2)_{i} (2i+1/2)_{i+1}}{(2i)!^2(j+i+1/2)!}
\end{align*}
we get:
$$\R_1(n;1,1) = \frac{1}{2^n} \prod_{j=0}^{n-1} \frac{j! (2j+1+1/2)_{j} (2j)!^2 (2j+1)_{j}}{(3j)!^2 (j+1+1/2)_{j+1}}.$$
Thus, because of \cite{robbins,kup}
$$A_V(2n+1)= (-3)^{n^2}\prod_{i, j\le2n+1,j\equiv1[2]} \frac {3( j-i)+1} {j-i+2n+1} = \prod_{j=1}^{n} \frac{ {{6j-2}\choose{2j}} }{ {{4j-1}\choose{2j}} },$$
we have to check that:
$$\frac{j! (2j+1+1/2)_{j} (2j)!^2 (2j+1)_{j}}{(3j)!^2 (j+1+1/2)_{j+1}} = \frac{ {{6j+4}\choose{2j+2}} }{ {{4j+3}\choose{2j+2}} }$$
which comes from a simple computation.

\medskip
\noindent {\em Proof of \pref{eq4} and \pref{eq5}.}
For equation \pref{eq4}, we know from \cite{ciucu5} that $\R_1(n;1,1/2)$ is equal to the number of cyclically symmetric 
self-complementary plane partitions (CSSCPP) in a hexagonal region, which is known \cite{kupPP} to be given by: 
$$P_{CSSC}(2n)=\Big(\prod_{i=0}^{n-1} \frac{(3i+1)!}{(n+i)!} \Big)^2
=A(n)^2.$$

\medskip
For equation \pref{eq5}, it has been shown in \cite{duchon} that $\R_2(n;1/2,1)$ is the number of quasi-cyclically symmetric self-complementary plane partitions (qCSSCPP) in a hexagonal region, which is proved to be given by: 
$$P_{qCSSC}(2n+1)=A(n)A(n+1).$$

\qed

\begin{remark}
It appears that the specialization of the functions $\R_\ell(n;x,y)$ to $x=y=1/2$ may also have interesting values. In particular, it seems that $\R_0(n;1/2,1/2)$ corresponds to the development of the generating series for $A_{UU}^{(2)}(4n)$ ({\it cf.} \cite{kup}) and that:
$$\R_2(n;1/2,1/2)=A_V(2n+3){{2n+1}\choose{n+1}}$$
which needs an explanation.
\end{remark}


\end{document}